%% LyX 2.3.2-2 created this file.  For more info, see http://www.lyx.org/.
%% Do not edit unless you really know what you are doing.
\documentclass[english]{article}
\usepackage[T1]{fontenc}
\usepackage[latin9]{inputenc}
\usepackage{babel}
\usepackage{amsmath}
\usepackage{amsthm}
\usepackage{amssymb}
\usepackage[all]{xy}
\usepackage[unicode=true,pdfusetitle,
 bookmarks=true,bookmarksnumbered=false,bookmarksopen=false,
 breaklinks=false,pdfborder={0 0 1},backref=false,colorlinks=false]
 {hyperref}

\makeatletter
%%%%%%%%%%%%%%%%%%%%%%%%%%%%%% Textclass specific LaTeX commands.
\numberwithin{equation}{section}
\numberwithin{figure}{section}
\theoremstyle{plain}
\newtheorem{thm}{\protect\theoremname}
\theoremstyle{plain}
\newtheorem{lem}[thm]{\protect\lemmaname}
\ifx\proof\undefined
\newenvironment{proof}[1][\protect\proofname]{\par
	\normalfont\topsep6\p@\@plus6\p@\relax
	\trivlist
	\itemindent\parindent
	\item[\hskip\labelsep\scshape #1]\ignorespaces
}{%
	\endtrivlist\@endpefalse
}
\providecommand{\proofname}{Proof}
\fi
\theoremstyle{plain}
\newtheorem{cor}[thm]{\protect\corollaryname}
\theoremstyle{definition}
\newtheorem{defn}[thm]{\protect\definitionname}

%%%%%%%%%%%%%%%%%%%%%%%%%%%%%% User specified LaTeX commands.
%%%%%%%%%%%%%%%%%%%%%%%%%%%%%%%%
%% 
%% X-Y-Macros
%%
%%%%%%%%%%%%%%%%%%%%%%%%%%%%%%%%
\makeatletter
\newcommand{\xyR}[1]{%
\makeatletter
\xydef@\xymatrixrowsep@{#1}
\makeatother
} % end of \xyR

\makeatletter
\newcommand{\xyC}[1]{%
\makeatletter
\xydef@\xymatrixcolsep@{#1}
\makeatother
} % end of \xyC

\makeatother

\providecommand{\corollaryname}{Corollary}
\providecommand{\definitionname}{Definition}
\providecommand{\lemmaname}{Lemma}
\providecommand{\theoremname}{Theorem}

\begin{document}
\title{Free-algebra functors from a coalgebraic perspective}
\author{H. Peter Gumm}
\maketitle
\begin{abstract}
We continue our study of free-algebra functors from a coalgebraic
perspective as begun in \cite{Gumm2019}. Given a set $\Sigma$ of
equations and a set $X$ of variables, let $F_{\Sigma}(X)$ be the
free $\Sigma-$algebra over $X$ and $\mathcal{V}(\Sigma)$ the variety
of all algebras satisfying $\Sigma.$ We consider the question, under
which conditions the $Set$-functor $F_{\Sigma}$ weakly preserves
pullbacks, kernel pairs, or preimages \cite{GS05}.

We first generalize a joint result with our former student Ch. Henkel,
asserting that an arbitrary $Set-$endofunc\-tor $F$ weakly preserves
kernel pairs if and only if it weakly preserves pullbacks of epis.

By slightly extending the notion of derivative $\Sigma'$ of a set
of equations $\Sigma$ as defined by Dent, Kearnes and Szendrei in
\cite{Dent2012}, we show that a functor $F_{\Sigma}$ (weakly) preserves
preimages if and only if $\Sigma$ implies its own derivative, i.e.
$\Sigma\vdash\Sigma'$, which amounts to saying that weak independence
implies independence for each variable occurrence in a term of $\mathcal{V}(\Sigma)$.
As a corollary, we obtain that the free-algebra functor will never
preserve preimages when $\mathcal{V}(\Sigma)$ is congruence modular.

Regarding preservation of kernel pairs, we show that for \emph{n}-permutable
varieties $\mathcal{V}(\Sigma),$ the functor $F_{\Sigma}$ preserves
kernel pairs if and only if $\mathcal{V}(\Sigma)$ is a Mal'cev variety,
i.e. 2-permutable.
\end{abstract}

\section{Introduction}

In his groundbreaking monograph ``Universal Coalgebra \textendash{}
a theory of systems''\cite{Rutten2000} Jan Rutten demonstrated how
all sorts of state based systems could be unified under the roof of
one abstract concept, that of a coalgebra. Concrete system types \textendash{}
automata, transition systems, nondeterministic, weighted, probabilistic
or second order systems \textendash{} can be modeled by choosing an
appropriate functor $F:Set\to Set$ which provides a \emph{type} for
the concrete coalgebras just as, on a less abstract level, signatures
describe types of algebras.

The (co)algebraic properties of the category $Set_{F}$ of all $F-$coalgebras
are very much dependent on certain preservation properties of the
functor $F$. A particular property of $F,$ which has been considered
relevant from the beginning was the preservation of certain weak limits.
In Rutten's original treatise\cite{Rut96}, lemmas and theorems were
marked with an asterisk, if they used the additional assumption that
$F$ \emph{weakly preserves pullbacks}, that is $F$ transforms pullback
diagrams in $Set$ into weak pullback diagrams.

In our lecture notes \cite{Gum99b}, we were able to remove a large
number of asterisks from Rutten's original presentation, and in joint
work with T. Schr\"{o}der \cite{GS05}, we managed to split the mentioned
preservation condition into two separate conditions, \emph{weak preservation
of kernel pairs} and \emph{preservation of preimages}. These properties
were then studied separately with an eye on their structure theoretic
significance.

It is therefore relevant and interesting to classify $Set$ functors
according to the mentioned preservation properties.

We start this paper by showing that for arbitrary $Set$ functors
weak preservation of kernel pairs is equivalent to weak preservation
of pullbacks of epis, a result, which was obtained jointly with our
former master student Ch.~Henkel.

Subsequently, we investigate $Set$ functors $F_{\Sigma}$ which associate
to a set $X$ the free $\Sigma-$algebra over $X.$ It turns out that
(weak) preservation of preimages by $F_{\Sigma}$ can be characterized
utilizing the \emph{derivative} $\Sigma'$ of $\Sigma$, which has
been studied a few years ago by Dent, Kearnes, and Szendrei \cite{Dent2012}.
For arbitrary sets of \emph{idempotent} equations $\Sigma$, they
show that the variety $\mathcal{V}(\Sigma)$ is congruence modular
if and only if $\Sigma\cup\Sigma'$ is inconsistent. Below, we extend
their notion of derivative to arbitrary sets of equations (not necessarily
idempotent) and are able to show that $F_{\Sigma}$ weakly preserves
preimages if and only if $\Sigma\vdash\Sigma'.$

Regarding preservation of kernel pairs, we exhibit an algebraic condition,
which to our knowledge has not been studied before and which appears
to be interesting in its own right. If $F_{\Sigma}$ weakly preserves
kernel pairs, that is if $F_{\Sigma}$ weakly preserves pullbacks
of epis, then for any pair $p,q$ of ternary terms satisfying
\[
p(x,x,y)\approx q(x,y,y)
\]
 there exists a quaternary term $s$ such that

\begin{eqnarray*}
p(x,y,z) & \approx & s(x,y,z,z)\\
q(x,y,z) & \approx & s(x,x,y,z).
\end{eqnarray*}

Applying this to the description of $n-$permutable varieties given
by Hagemann and Mitschke \cite{Hagemann1973}, we find that for an
$n-$permutable variety $\mathcal{V}(\Sigma),$ the functor $F_{\Sigma}$
weakly preserves kernel pairs if and only if $\mathcal{V}(\Sigma)$
is a Mal'cev variety, i.e. there exists a term $m(x,y,z)$ such that
the equations 
\begin{eqnarray*}
m(x,y,y) & \approx & x\\
m(x,x,y) & \approx & y
\end{eqnarray*}
are satisfied.

\section{Preliminaries}

For the remainder of this work we shall denote function application
by juxtaposition, associating to the right, i.e. $fx$ denotes $f(x)$
and $fgx$ denotes $f(g(x)).$ 

If $F$ is a functor, we denote by $F(X)$ the application of $F$
to an object $X$ and by $Ff$ the application of $F$ to a morphism
$f.$

Given a $Set$-functor $F,$ an $F$-coalgebra is simply a map $\alpha:A\to F(A)$,
where $A$ is called the \emph{base set} and $\alpha$ the \emph{structure
map} of the coalgebra $\mathcal{A}=(A,\alpha).$ A \emph{homomorphism}
between coalgebras $\mathcal{A}=(A,\alpha)$ and $\mathcal{B}=(B,\beta)$
is a map $\varphi:A\to B$ satisfying 
\[
\beta\circ\varphi=F\varphi\circ\alpha.
\]

The functor $F$ is called the \emph{type} of the coalgebra. The class
of all coalgebras of type $F$ together with their homomorphisms forms
a category $Set_{F}.$ The structure of this category is known to
depend heavily on several pullback preservation properties of the
functor $F.$ 

A pullback diagram 
\[
\xymatrix{A_{1}\ar[r]^{f_{1}} & C\\
P\ar[u]^{p_{1}}\ar[r]_{p_{2}} & A_{2}\ar[u]_{f_{2}}
}
\]
is called a \emph{kernel pair}, if $f_{1}=f_{2}$ and it is called
a \emph{preimage} if $f_{1}$ or $f_{2}$ is mono.

A functor $F$ is said to \emph{weakly preserve pullbacks} if $F$
transforms each pullback diagram into a weak pullback diagram. $F$
weakly preserving kernel pairs, resp. preimages are defined likewise. 

In order to check whether in the category $Set$ a diagram as above
is a weak pullback, we may argue elementwise: $(P,p_{1},p_{2})$ is
a weak pullback, if for each pair $(a_{1},a_{2})$ with $a_{i}\in A_{i}$
and $f_{1}a_{1}=f_{2}a_{2}$ there exists some $a\in P$ such that
$p_{1}a=a_{1}$ and $p_{2}a=a_{2}.$

Hence, to see that a $Set-$functor $F$ weakly preserves a pullback
$(P,p_{1},p_{2}),$ we must check that for any pair $(u_{1},u_{2})$
with $u_{i}\in F(A_{i})$ and $(Ff_{1})u_{1}=(Ff_{2})u_{2}$ we can
find some $w\in F(P)$ such that$(Fp_{1})w=u_{1}$ and $(Fp_{2})w=u_{2}.$

One easily checks that a \emph{weak preimage} is automatically a \emph{preimage.}
Hence, if $F$ preserves monos, then $F$ \emph{preserves }preimages
if and only if $F$\emph{ weakly preserves }preimages. 

Assuming the axiom of choice, all epis in the category $Set$ are
right invertible, hence $F$ preserves epis. Monos are left-invertible,
except for the empty mappings $\emptyset_{X}:\emptyset\to X$ when
$X\ne\emptyset$. Hence $F$ surely preserves monos with nonempty
domain. 

Most \emph{Set}-functors also preserve monos with empty domain. This
will, in particular, be the case for the free-algebra functor $F_{\Sigma},$
which we shall study in the later parts of this work.

For $Set$-functors $F$ which fail to preserve monos with empty domain,
there is an easy fix, modifying $F$ solely on the empty set $\emptyset$
and on the empty mappings $\emptyset_{X}:\emptyset\to X$, so that
the resulting functor $F^{\star}$ preserves all monos. The details
can be found in \cite{Bar93} or in \cite{Gum05}. This modification
is irrelevant as far as coalgebras are concerned, since it affects
only the empty coalgebra. Yet it allows us to assume from now on,
that $F$ preserves all monos and all epis.

If $f_{1}$ and $f_{2}$ are both injective, then their pullback is
called an intersection. It is well known from \cite{TrnkovaDescr1}
that a set functor automatically preserves nonempty intersections,
and, after possibly modifying it at $\emptyset$ as indicated above,
preserves all finite intersections.

\section{Weak preservation of epi pullbacks}

The following lemma from \cite{GS05} shows that weak pullback preservation
can be split into two separate preservation requirements.
\begin{lem}
For a $Set$-functor $F$ the following are equivalent:
\begin{enumerate}
\item F weakly preserves pullbacks.
\item F weakly preserves kernels and preimages.
\end{enumerate}
\end{lem}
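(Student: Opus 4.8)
The implication $(1)\Rightarrow(2)$ is immediate: kernel pairs and preimages are particular pullbacks, so a functor carrying every pullback square to a weak pullback certainly does so for these two kinds.

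For $(2)\Rightarrow(1)$ the plan is to present an arbitrary pullback as assembled from one kernel-pair square and two preimages, and then to argue elementwise. Given the pullback $(P,p_1,p_2)$ of $f_1\colon A_1\to C$ and $f_2\colon A_2\to C$, I would pass to the coproduct $A_1+A_2$ with injections $\iota_1,\iota_2$, form $g:=[f_1,f_2]\colon A_1+A_2\to C$, and take its kernel pair $(K,\pi_1,\pi_2)$. The key observation, to be checked by a short set-theoretic computation, is that inside $K$
\[
P\;\cong\;\pi_1^{-1}(\iota_1A_1)\;\cap\;\pi_2^{-1}(\iota_2A_2),
\]
and that, writing $K_i:=\pi_i^{-1}(\iota_iA_i)$, the composite $P\hookrightarrow K_i\to A_i$ is exactly $p_i$. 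Now each square defining $K_i$ is the pullback of the mono $\iota_i$ along $\pi_i$, hence a preimage, and $P=K_1\cap K_2\hookrightarrow K$ is a pullback of two monos, hence again a preimage; so by hypothesis $(2)$, together with the standing assumption that $F$ preserves monos, $F$ weakly preserves all three of these squares.

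The rest is a diagram chase. Starting from elements $u_i\in FA_i$ with $(Ff_1)u_1=(Ff_2)u_2$, I would set $v_i:=(F\iota_i)u_i$, note that $(Fg)v_1=(Ff_1)u_1=(Ff_2)u_2=(Fg)v_2$, and invoke weak preservation of the kernel pair of $g$ to obtain $z\in FK$ with $(F\pi_i)z=(F\iota_i)u_i$ for $i=1,2$. Since $(F\pi_1)z$ lies in $\operatorname{im}(F\iota_1)$, weak preservation of the preimage $K_1$ yields $z_1\in FK_1$ mapping to $z$ under $FK_1\to FK$ and having $FA_1$-component $u_1$; symmetrically there is $z_2\in FK_2$ lying over $z$ with $FA_2$-component $u_2$. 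As $z$ then lies in the image of both $FK_1\to FK$ and $FK_2\to FK$, weak preservation of the intersection $P=K_1\cap K_2$ produces $w\in FP$ mapping to $z_1$ and to $z_2$, and reading off components --- using that $F\iota_1$ and $F\iota_2$ are monic --- gives $(Fp_1)w=u_1$ and $(Fp_2)w=u_2$, which is precisely weak preservation of $(P,p_1,p_2)$. The degenerate case $P=\varnothing$ should be dispatched separately: after the modification of $F$ at $\varnothing$ one has $F\varnothing=\varnothing$ and $F$ preserves finite intersections, so $\operatorname{im}(Ff_1)\cap\operatorname{im}(Ff_2)=\varnothing$ and the condition to be verified is vacuous.

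I expect the only genuine obstacle to be the decomposition in the first step. The tempting shortcuts fail because $F$ need not preserve products or coproducts: $P$ cannot be realized as the equalizer $\{(a_1,a_2):f_1a_1=f_2a_2\}\subseteq A_1\times A_2$, and although $K\cong\operatorname{Ker}f_1+P+P+\operatorname{Ker}f_2$ displays $P$ as a retract of $K$, that retraction cannot be chosen so as to interact coherently with $\pi_1$ and $\pi_2$. Routing the pullback through the kernel pair of $[f_1,f_2]$ and then excising $P$ by two \emph{monomorphic} preimages avoids products altogether, after which the argument is purely formal.
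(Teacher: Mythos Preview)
The paper does not prove this lemma at all: it is simply quoted from \cite{GS05}, so there is no ``paper's own proof'' to compare against. Your argument therefore has to be assessed on its own merits.

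Your strategy is sound and, incidentally, very much in the spirit of the paper's later Theorem~\ref{thm:Gumm-Henkel}: passing to $g=[f_1,f_2]\colon A_1+A_2\to C$, taking its kernel pair $(K,\pi_1,\pi_2)$, and carving out $P$ inside $K$ as $\pi_1^{-1}(\iota_1A_1)\cap\pi_2^{-1}(\iota_2A_2)$ is exactly right. The three auxiliary squares are all preimages (the third being an intersection, hence a pullback along a mono), so hypothesis~(2) applies to each, and the elementwise chase you outline goes through. The identification of $p_i$ with the composite $P\hookrightarrow K_i\xrightarrow{q_i}A_i$ is correct, and the final step recovering $u_i$ from $w$ uses only that $F\iota_i$ is mono.

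One small slip: in your treatment of the case $P=\varnothing$ you write ``after the modification of $F$ at $\varnothing$ one has $F\varnothing=\varnothing$''. This is not what the modification does; it merely forces $F\varnothing_X$ to be injective, and $F\varnothing$ may well be nonempty (take $F=1+(-)$). Fortunately you do not need a separate case analysis at all. After the modification $F$ preserves \emph{all} finite intersections (this is exactly what the paper recalls from Trnkov\'a), so your main argument already handles $P=\varnothing$: the element $z$ lies in $\operatorname{im}(Fj_1)\cap\operatorname{im}(Fj_2)=F(K_1\cap K_2)=F\varnothing\subseteq FK$, and the lift $w\in F\varnothing$ exists and has the required projections. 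Simply delete the ``degenerate case'' paragraph and note that the chase is uniform.
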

A special case of a preimage is obtained if we consider a subset $U\subseteq A$
as the preimage of $\{1\}$ along its characteristic function $\chi_{U}:A\to\{0,1\}.$
\[
\xymatrix{A\ar[r]^{\chi_{U}} & \{0,1\}\\
U\ar@{^{(}->}[u]\ar[r]^{!_{U}} & \{1\}\ar@{^{(}->}[u]
}
\]
Such preimages are called \emph{classifying, }and we shall later make
use of the following lemma from \cite{GS05}:
\begin{lem}
\label{lem:classifying}A $Set$-functor $F$ preserves preimages
if and only if it preserves classifying preimages.
\end{lem}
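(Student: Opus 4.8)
The plan is to treat the two implications separately. One direction costs nothing: a classifying preimage \emph{is} a preimage, namely the pullback of the mono $\{1\}\hookrightarrow\{0,1\}$ along a characteristic map, so if $F$ preserves all preimages it certainly preserves the classifying ones. The substance is the converse, and for it the idea is to display an arbitrary preimage as the left-hand square in a horizontal pasting of two classifying preimages, and then to quote the pasting lemma for pullbacks.

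So assume $F$ preserves classifying preimages, and let a preimage be given, with legs $f_1:A_1\to C$ and $f_2:A_2\to C$ where $f_2$ is mono. Passing to an isomorphic diagram --- harmless, since $F$ preserves isomorphisms --- I may assume $A_2$ is literally the subset $U:=f_2(A_2)\subseteq C$ with $f_2$ the inclusion, the pullback object is $P:=f_1^{-1}(U)\subseteq A_1$, and the two remaining legs are the inclusion $P\hookrightarrow A_1$ and the corestriction $q:P\to U$ of $f_1$. Let $\chi_U:C\to\{0,1\}$ be the characteristic map of $U$. The bookkeeping fact that makes everything work is that $\chi_U\circ f_1$ is precisely the characteristic map of $P=f_1^{-1}(U)\subseteq A_1$. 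This yields the diagram
\[
\xymatrix{P\ar[r]^{q}\ar@{^{(}->}[d] & U\ar[r]\ar@{^{(}->}[d] & \{1\}\ar@{^{(}->}[d]\\
A_1\ar[r]_{f_1} & C\ar[r]_{\chi_U} & \{0,1\}
}
\]
in which the right-hand square is the classifying preimage of $\{1\}$ along $\chi_U$, the outer rectangle is the classifying preimage of $\{1\}$ along $\chi_U\circ f_1$, and the left-hand square is the preimage we started from.

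Next I would apply $F$. Functors preserve composites, so the image is again a two-row three-column diagram whose outer rectangle is $F$ of the outer rectangle above. By assumption $F$ sends both the right-hand square and the outer rectangle to pullbacks in $Set$; hence, by the pasting lemma in the direction ``right square and outer rectangle pullbacks $\Rightarrow$ left square a pullback'', $F$ of the left-hand square is a pullback. This says exactly that $F$ preserves the given preimage, which finishes the converse.

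I do not expect a serious obstacle. The points requiring a little care are: reducing an arbitrary preimage to its ``honest subset'' presentation (using that monos in $Set$ are, up to isomorphism, inclusions, and that $F$ preserves isomorphisms and, as assumed throughout, monos); the identity $\chi_{f_1^{-1}(U)}=\chi_U\circ f_1$, which is what makes the outer rectangle a bona fide \emph{classifying} preimage rather than a mere preimage; and invoking the correct half of the pasting lemma. If one prefers to avoid the pasting lemma, the same conclusion follows from the elementwise criterion recalled in the Preliminaries: since $F$ preserves monos, view $F(P)\subseteq F(A_1)$ and $F(U)\subseteq F(C)$; preservation of the two classifying preimages gives $F(P)=(F(\chi_U\circ f_1))^{-1}(F(\{1\}))$ and $F(U)=(F\chi_U)^{-1}(F(\{1\}))$; and for $w\in F(A_1)$ with $(Ff_1)w\in F(U)$, chasing $w$ through $F(\chi_U\circ f_1)=F\chi_U\circ Ff_1$ and these two identifications forces $w\in F(P)$, which is precisely the weak-pullback condition for the given preimage.
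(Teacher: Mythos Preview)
Your argument is correct. The reduction of an arbitrary preimage to a subset inclusion, the pasting of the two classifying squares via the identity $\chi_{f_1^{-1}(U)}=\chi_U\circ f_1$, and the application of the cancellation direction of the pullback pasting lemma all go through as you describe; the elementwise alternative you sketch at the end is equally sound.

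There is nothing to compare against, however: the paper does not prove this lemma. It is quoted from \cite{GS05} and used as a black box in the proof of the preimage-preservation theorem. So your proposal is not so much an alternative to the paper's argument as a reconstruction of a result the paper imports. For what it is worth, the factor-through-$\{0,1\}$ trick you give is exactly the natural proof and is the one found in the cited source.
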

In the following section we need to consider the action of a functor
on pullback diagrams where both $f_{1}$ and $f_{2}$ are surjective.
Before stating this, we shall prove a useful lemma, which is true
in every category:
\begin{lem}
\label{lem:sum}Let morphisms $f:A\to C$ and $f_{i}:A_{i}\to C$,
for $i=1,2$ be given, as well as $e_{i}:A_{i}\to A$ with left inverses
$h_{i}:A\to A_{i}$ such that the diagram below commutes. If $(K,\pi_{1},\pi_{2})$
is a weak kernel of $f$ then $(K,h_{1}\circ\pi_{1},h_{2}\circ\pi_{2})$
is a weak pullback of $f_{1}$ and $f_{2}.$
\[
\xymatrix{ &  & A_{i}\ar@<.2pc>[d]^{e_{i}}\ar[drr]^{f_{i}}\ar@(ur,ul)\\
K\ar@<.2pc>[rr]^{\pi_{1}}\ar@<-.2pc>[rr]_{\pi_{2}} & \, & A\ar@<.2pc>[u]^{h_{i}}\ar[rr]_{f} & \, & {\,\,C\,\,}
}
\]
\end{lem}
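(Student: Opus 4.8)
The plan is to verify directly the two defining properties of a weak pullback of $f_{1}$ and $f_{2}$, using only the stated identities $h_{i}\circ e_{i}=\mathrm{id}_{A_{i}}$ together with the commutativity of the displayed diagram, which I read as $f_{i}\circ h_{i}=f$ for $i=1,2$; composing the latter with $e_{i}$ on the right and using the one-sided inverse then also gives $f_{i}=f\circ e_{i}$. The argument will be purely diagrammatic and valid in any category, as claimed.

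First I would check that $(K,h_{1}\circ\pi_{1},h_{2}\circ\pi_{2})$ is a cone over the cospan, i.e. $f_{1}\circ h_{1}\circ\pi_{1}=f_{2}\circ h_{2}\circ\pi_{2}$. Since $(K,\pi_{1},\pi_{2})$ is a weak kernel of $f$ it is in particular a cone, so $f\circ\pi_{1}=f\circ\pi_{2}$; substituting $f_{i}\circ h_{i}=f$ on each side closes this step. This is precisely where the ``$f_{i}\circ h_{i}=f$'' half of the hypothesis is indispensable: with only $f_{i}=f\circ e_{i}$ the square need not even commute (take $f$ non-constant and the $h_{i}$ constant), so I would flag this as the subtle point rather than a real difficulty.

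Next I would establish weak universality. Given a test object $T$ with $g_{1}\colon T\to A_{1}$ and $g_{2}\colon T\to A_{2}$ satisfying $f_{1}\circ g_{1}=f_{2}\circ g_{2}$, put $\bar g_{i}:=e_{i}\circ g_{i}\colon T\to A$. Then $f\circ\bar g_{1}=f\circ e_{1}\circ g_{1}=f_{1}\circ g_{1}=f_{2}\circ g_{2}=f\circ e_{2}\circ g_{2}=f\circ\bar g_{2}$, so $(\bar g_{1},\bar g_{2})$ is a cone over the pair $f,f$, and the weak kernel property of $(K,\pi_{1},\pi_{2})$ supplies a morphism $t\colon T\to K$ with $\pi_{i}\circ t=\bar g_{i}$. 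Composing with $h_{i}$ and using $h_{i}\circ e_{i}=\mathrm{id}_{A_{i}}$ gives $(h_{i}\circ\pi_{i})\circ t=g_{i}$, so $t$ is the desired (not necessarily unique) factorization, and $(K,h_{1}\circ\pi_{1},h_{2}\circ\pi_{2})$ is a weak pullback of $f_{1}$ and $f_{2}$. I expect no genuine obstacle here; the only things needing care are extracting the correct commutativity constraints from the picture and keeping the direction of the one-sided inverse straight, namely that $h_{i}$ is a \emph{left} inverse of $e_{i}$ ($h_{i}\circ e_{i}=\mathrm{id}$), not the other composite.
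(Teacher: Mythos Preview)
Your proof is correct and follows essentially the same route as the paper's: verify the cone condition via $f_{i}\circ h_{i}=f$ and $f\circ\pi_{1}=f\circ\pi_{2}$, then for weak universality push the test cone through the $e_{i}$, invoke the weak kernel property, and cancel with $h_{i}\circ e_{i}=\mathrm{id}$. Your explicit remark that the diagram must be read as $f_{i}\circ h_{i}=f$ (and not merely $f_{i}=f\circ e_{i}$) is a useful clarification of a hypothesis the paper leaves implicit in the picture.
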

\begin{proof}
Assuming $(K,\pi_{1},\pi_{2})$ is a weak kernel of $f,$ then setting
$k_{i}:=h_{i}\circ\pi_{i}$ we obtain
\[
f_{1}\circ k_{1}=f_{1}\circ h_{1}\circ\pi_{1}=f\circ\pi_{1}=f\circ\pi_{2}=f_{2}\circ h_{2}\circ\pi_{2}=f_{2}\circ k_{2}.
\]
This shows that $(K,k_{1},k_{2})$ is a candidate for a pullback of
$f_{1}$ with $f_{2}$. Let $(Q,q_{1},q_{2})$ be another candidate,
i.e. 
\[
f_{1}\circ q_{1}=f_{2}\circ q_{2},
\]
 then we obtain 
\[
f\circ e_{1}\circ q_{1}=f_{1}\circ q_{1}=f_{2}\circ q_{2}=f\circ e_{2}\circ q_{2},
\]
which demonstrates that $(Q,e_{1}\circ q_{1},e_{2}\circ q_{2})$ is
a competitor to $(K,\pi_{1},\pi_{2})$ for being a weak kernel of
$f.$ This yields a morphism $q:Q\to K$ with 

\[
\pi_{i}\circ q=e_{i}\circ q_{i}.
\]
 From this we obtain 
\[
k_{i}\circ q=h_{i}\circ\pi_{i}\circ q=h_{i}\circ e_{i}\circ q_{i}=q_{i}
\]
 as required.
\end{proof}
\begin{thm}
\label{thm:Gumm-Henkel}Let $\mathcal{C}$ be a category with finite
sums and kernel pairs. If a functor $F:\mathcal{C}\to\mathcal{C}$
weakly preserves kernel pairs, then it weakly preserves pullbacks
of retractions.
\end{thm}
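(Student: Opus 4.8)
The plan is to reduce the statement about pullbacks of retractions to the already-available fact that $F$ weakly preserves kernel pairs, using Lemma~\ref{lem:sum} as the bridge. Suppose we are given a pullback diagram in $\mathcal{C}$
\[
\xymatrix{A_{1}\ar[r]^{f_{1}} & C\\
P\ar[u]^{p_{1}}\ar[r]_{p_{2}} & A_{2}\ar[u]_{f_{2}}
}
\]
in which both $f_{1}$ and $f_{2}$ are retractions, i.e. each $f_{i}$ has a section $s_{i}:C\to A_{i}$ with $f_{i}\circ s_{i}=\mathrm{id}_{C}$. The first step is to form the coproduct $A:=A_{1}+A_{2}$ with coproduct injections $e_{i}:A_{i}\to A$, and to define $f:A\to C$ by the copairing $f:=[f_{1},f_{2}]$, so that $f\circ e_{i}=f_{i}$. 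I then need left inverses $h_{i}:A\to A_{i}$ for the $e_{i}$; these exist because $e_{i}$ is a coproduct injection into a binary sum, e.g. $h_{1}:=[\mathrm{id}_{A_{1}},s_{1}\circ f_{2}]$ and symmetrically $h_{2}:=[s_{2}\circ f_{1},\mathrm{id}_{A_{2}}]$, and one checks $h_{i}\circ e_{i}=\mathrm{id}_{A_{i}}$ and $f\circ h_{i}=f_i$ — the latter identity being exactly what makes the hypotheses of Lemma~\ref{lem:sum} hold. (This is the one place the retraction hypothesis is used: without sections, the $h_i$ need not exist.)

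The second step is to pass through the functor $F$. Since $F$ weakly preserves kernel pairs, if $(K,\pi_{1},\pi_{2})$ is the kernel pair of $f$ in $\mathcal{C}$, then $(FK,F\pi_{1},F\pi_{2})$ is a weak kernel pair of $Ff$ in $\mathcal{C}$. Now apply $F$ to the entire configuration of the previous paragraph: we obtain morphisms $Ff:FA\to FC$, $Ff_{i}:FA_{i}\to FC$, $Fe_{i}:FA_{i}\to FA$, and $Fh_{i}:FA\to FA_{i}$, and functoriality preserves all the needed equations — $Fh_{i}\circ Fe_{i}=\mathrm{id}_{FA_{i}}$, $Ff\circ Fe_{i}=Ff_{i}$, and $Ff\circ Fh_{i}=Ff_i$ — so the hypotheses of Lemma~\ref{lem:sum} are satisfied for the $F$-image diagram, with $(FK,F\pi_{1},F\pi_{2})$ playing the role of the weak kernel of $Ff$.

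The third step is to invoke Lemma~\ref{lem:sum} itself: it yields that $(FK,\;Fh_{1}\circ F\pi_{1},\;Fh_{2}\circ F\pi_{2})$ is a weak pullback of $Ff_{1}$ and $Ff_{2}$. It remains to identify this with (a weak version of) $F$ applied to the original pullback $(P,p_{1},p_{2})$. For this I would argue that, back in $\mathcal{C}$, the triple $(K,h_{1}\circ\pi_{1},h_{2}\circ\pi_{2})$ is a weak pullback of $f_{1}$ with $f_{2}$ — this is Lemma~\ref{lem:sum} applied in $\mathcal{C}$ before applying $F$ — and a weak pullback admits a morphism $t:K\to P$ to the genuine pullback with $p_{i}\circ t=h_{i}\circ\pi_{i}$. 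Applying $F$ gives $Fp_{i}\circ Ft=Fh_{i}\circ F\pi_{i}$, so any competitor that factors through $(FK,Fh_{1}\circ F\pi_{1},Fh_{2}\circ F\pi_{2})$ factors through $(FP,Fp_{1},Fp_{2})$ via $Ft$ composed with that factorization; hence $(FP,Fp_{1},Fp_{2})$ is a weak pullback of $Ff_{1}$ and $Ff_{2}$, which is the claim.

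The step I expect to be the main obstacle — or at least the one needing the most care — is the bookkeeping in the third step: making sure the weak-pullback property genuinely transfers from $FK$ to $FP$ rather than merely producing a weak pullback at $FK$. The subtlety is that weak limits are not unique, so one cannot simply say "$FK$ and $FP$ are the same"; instead one must explicitly use the comparison morphism $t$ (and note that $p_1\circ t = h_1\circ\pi_1$, $p_2\circ t = h_2\circ\pi_2$ force every cone over $Ff_1,Ff_2$ that lifts to $FK$ to lift to $FP$). Everything else is a routine diagram chase, and the construction of the $h_i$ from the sections $s_i$ is the only point where the retraction hypothesis enters.
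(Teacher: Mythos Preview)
Your argument is correct and matches the paper's proof almost line for line: form $A=A_{1}+A_{2}$, define $f=[f_{1},f_{2}]$ and the $h_{i}$ from the sections, invoke Lemma~\ref{lem:sum} both in $\mathcal{C}$ and after applying $F$, then push the weak pullback from $FK$ to $FP$ along the comparison map. One small slip: the identity you need is $f_{i}\circ h_{i}=f$, not ``$f\circ h_{i}=f_{i}$'' (the latter does not typecheck); and the paper frames your final transfer slightly differently by observing that $P$ is a retract of $K$ (via a section $p:P\to K$ of your $t$), but your direct use of $Ft$ alone already suffices.
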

\begin{proof}
Given the pullback $(P,p_{1},p_{2})$ of retractions $f_{i}:A_{i}\to C,$
we need to show that $(F(P),Fp_{1},Fp_{2})$ is a weak pullback. For
that reason we shall relate it to the kernel pair of $f:=[f_{1},f_{2}]:A_{1}+A_{2}\to C.$

Since the $f_{i}$ are retractions, i.e. right invertible, we can
choose $g_{i}:C\to A_{i}$ with 
\[
f_{i}\circ g_{i}=id_{C}.
\]
 Let $e_{i}:A_{i}\to A_{1}+A_{2}$ be the canonical inclusions, then
\[
f\circ e_{i}=[f_{1},f_{2}]\circ e_{i}=f_{i}.
\]
Define $h_{1}:=[id_{A_{1}},g_{1}\circ f_{2}]$ and $h_{2}:=[g_{2}\circ f_{1},id_{A_{2}}]$,
then $h_{i}:A_{1}+A_{2}\to A_{i}$ satisfy 
\[
h_{i}\circ e_{i}=id_{A_{i}}
\]
 as well as
\[
f_{1}\circ h_{1}=[f_{1}\circ id_{A_{1}},f_{1}\circ g_{1}\circ f_{2}]=[f_{1},f_{2}]=f_{2}\circ h_{2}.
\]
Thus we have established commutativity of the right half of the following
diagram.

\[
\xymatrix{P\ar@(ur,ul)\ar@<.2pc>[d]^{p}\ar[rr]^{p_{i}} & \, & A_{i}\ar@(ur,ul)\ar@<.2pc>[d]^{e_{i}}\ar@{->}[drr]_{f_{i}}\\
K\ar@<.2pc>[u]^{k}\ar[urr]^{k_{i}}\ar@<.2pc>[rr]^{\pi_{1}}\ar@<-.2pc>[rr]_{\pi_{2}} &  & A_{1}+A_{2}\ar@<.2pc>[u]^{h_{i}}\ar[rr]_{[f_{1},f_{2}]} & \, & {\,\,C\,\,}\ar@(dr,ur)\ar@{..>}@/_{1pc}/[ull]_{g_{i}}
}
\]

We add the kernel pair $(K,\pi_{1},\pi_{2})$ of $f:=[f_{1},f_{2}]$
and the pullback $(P,p_{1},p_{2})$ of $f_{1}$ and $f_{2}$. Now,
the previous lemma asserts that $(K,k_{1},k_{2})$ is a weak pullback
of $f_{1}$ and $f_{2}$, therefore we obtain a morphism $p:P\to K$
with $k_{i}\circ p=p_{i}$.

On the other hand, $(P,p_{1},p_{2})$ being the real pullback, earns
us a unique morphism $k:K\to P$ with $p_{i}\circ k=k_{i}$ and $k\circ p=id_{P}$
by uniqueness.

Next, we apply the functor $F$ to the above diagram. The requirements
of Lemma \ref{lem:sum} remain intact for the image diagram, and assuming
that $F$ weakly preserves the kernel $(K,\pi_{1},\pi_{2})$, we obtain
that $(F(K),Fk_{1},Fk_{2})$ is a weak pullback of $Ff_{1}$ with
$Ff_{2}$. Since furthermore $F(P)$ remains being a retract of $F(K)$
by means of $Fk\circ Fp=Fid_{P}=id_{F(P)},$ we see that $(F(P),Fp_{1},Fp_{2}),$
too, is a weak pullback of $Ff_{1}$ and $Ff_{2}$, as required.
\end{proof}
From this we can obtain our mentioned joint result with Ch. Henkel.
With an elementwise proof this appears in his master thesis, which
has been completed under our guidance \cite{henkel2010}:
\begin{cor}
\label{cor:GH}For a $Set-$endofunctor $F$ the following are equivalent:
\begin{enumerate}
\item $F$ weakly preserves kernel pairs.
\item $F$ weakly preserves pullbacks of epis.
\end{enumerate}
\end{cor}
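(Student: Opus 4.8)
The plan is to obtain both implications almost for free from the material already developed, specializing Theorem~\ref{thm:Gumm-Henkel} to $\mathcal{C}=Set$, which has finite sums (disjoint unions) and kernel pairs.

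For $(1)\Rightarrow(2)$ I would first recall that, under the axiom of choice, every epi in $Set$ is a retraction: if $f\colon A\to C$ is surjective and $C\neq\emptyset$ one picks a section, while if $C=\emptyset$ then $A=\emptyset$ and $f=\mathrm{id}_\emptyset$ is trivially right invertible. Consequently every pullback of epis in $Set$ is a pullback of retractions, and Theorem~\ref{thm:Gumm-Henkel} applies verbatim: a functor weakly preserving kernel pairs weakly preserves pullbacks of retractions, hence a fortiori pullbacks of epis.

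For $(2)\Rightarrow(1)$ the idea is to exhibit an arbitrary kernel pair as a pullback of epis via image factorization. Given the kernel pair $(P,p_1,p_2)$ of a map $f\colon A\to C$, factor $f=m\circ e$ with $e\colon A\to B$ surjective and $m\colon B\to C$ injective. Since $m$ is mono we have $fa_1=fa_2\iff ea_1=ea_2$, so $(P,p_1,p_2)$ is simultaneously the kernel pair of $e$, i.e. the pullback of the epi $e$ with itself. This is a pullback of epis, hence weakly preserved by $F$ under the hypothesis, and therefore $F$ weakly preserves the kernel pair of $f$.

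I do not expect a genuine obstacle here: the substance of the argument sits entirely in Theorem~\ref{thm:Gumm-Henkel} (and through it in Lemma~\ref{lem:sum}). The only points needing a word of care are the degenerate cases around $\emptyset$ — the existence of sections for surjections onto a nonempty set, and the behaviour of the image factorization when $A$ or $C$ is empty — but in each of these the diagrams collapse to trivial ones, which $F$ handles by the standing convention (after the modification at $\emptyset$ recalled in the Preliminaries) that $F$ preserves all monos and all epis.
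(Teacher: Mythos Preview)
Your proposal is correct and follows essentially the same approach as the paper: for $(1)\Rightarrow(2)$ both invoke the axiom of choice to see that epis in $Set$ are retractions and then apply Theorem~\ref{thm:Gumm-Henkel}, while for $(2)\Rightarrow(1)$ both use the epi--mono factorization to identify the kernel pair of $f$ with that of its epi part. Your extra remarks on the degenerate $\emptyset$-cases are harmless elaborations the paper leaves implicit.
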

\begin{proof}
In $Set$, each map $f$ can be factored as $f=\iota\circ e$ where
$e$ is epi and $\iota$ is mono. Therefore, the kernel of $f$ is
the same as the kernel of $e.$ This takes care of the direction $(2\to1).$
For the other direction, the axiom of choice asserts that in $Set$
epis are right invertible, so the conditions of Theorem \ref{thm:Gumm-Henkel}
are met.
\end{proof}

\section{Free-algebra functors and Mal'cev conditions}

Given a finitary algebraic signature $S=\text{(\ensuremath{\mathfrak{f}_{i},n_{i})_{i\in I}}}$,
fixing a family of function symbols $\mathfrak{f}_{i}$, each of arity
$n_{i},$ and given a set $\Sigma$ of equations, let $\mathcal{V}(\Sigma)$
be the variety defined by $\Sigma$, i.e. the class of all algebras
$\mathfrak{A}=(A,(f_{i}^{\mathfrak{A}})_{i\in I})$ satisfying all
equations from $\Sigma.$

The forgetful functor, sending an algebra $\mathfrak{A}$ from $\mathcal{V}(\Sigma)$
to its base set $A,$ has a left adjoint $F_{\Sigma},$ which assigns
to each set $X$, considered as set of variables, the free $\Sigma$-algebra
over $X.$ $F_{\Sigma}(X)$ consists of equivalence classes of terms
$p,$ which arise by syntactically composing basic operations named
in the signature, using only variables from $X$.

Two terms $p$ and $q$ are identified if the equality $p\approx q$
is a consequence of the equations in $\Sigma.$ This is the same as
saying that $p$ and $q$ induce the same operation $p^{\mathfrak{A}}=q^{\mathfrak{A}}$
on each algebra $\mathfrak{A}\in\mathcal{V}(\Sigma).$ Instead of
$p$ we often write $p(x_{1},...,x_{n})$ to mark all occurrences
$x_{1},...,x_{n}$ of variables in the term $p.$

$F_{\Sigma}$ is clearly a functor (in fact a monad), and its action
on maps $\varphi:X\to Y$ can be described as variable substitution,
sending $p(x_{1},...,x_{n})\in F_{\Sigma}(X)$ to $p(\varphi x_{1},...,\varphi x_{n})\in F_{\Sigma}(Y)$.

$\Sigma$ is called \emph{idempotent} if for every function symbol
$\mathfrak{f}$ appearing in $\Sigma$ we have $\Sigma\vdash\mathfrak{f}(x,...,x)\approx x$.
As a consequence, all term operations satisfy the corresponding equations,
so $\Sigma$ is idempotent iff $F_{\Sigma}(\{x\})=\{x\}.$ In this
case, we also call the variety $\mathcal{V}(\Sigma)$ idempotent.
As an example, the variety of all lattices is idempotent, whereas
the variety of groups is not.

In \cite{Gumm2019} we have recently shown that for an idempotent
set $\Sigma$ of equations $F_{\Sigma}$ weakly preserves products.

In 1954, A.I.~Mal'cev \cite{Malcev1954,Malcev1963} discovered that
a variety has \emph{permutable congruences}, i.e. $\Theta\circ\Psi=\Psi\circ\Theta$
holds for all congruences $\Theta$ and $\Psi$ in each algebra of
$\mathcal{V}(\Sigma),$ if and only if there exists a ternary term
$m(x,y,z)$ satisfying the equations 
\[
x\approx m(x,y,y)\text{ and }m(x,x,y)\approx y.
\]
Permutability of congruences $\Theta$ and $\Psi$ can be generalized
to \emph{n-permutability}, requiring that the \emph{n}-fold relational
compositions agree:
\[
\Theta\circ\Psi\circ\Theta\circ\cdots\,=\Psi\circ\Theta\circ\Psi\circ\cdots\,.
\]
 Here each side is meant to be the relational composition of $n$
factors.

J. Hagemann and A.~Mitschke \cite{Hagemann1973}, generalizing the
original Mal'cev result, showed that a variety $\mathcal{V}$ is $n-$permutable
if and only if there are ternary terms $p_{1},...,p_{n-1}$ such the
following series of equations is satisfied:

\begin{eqnarray}
x & \approx & p_{1}(x,y,y)\label{eq:n-permutabl}\\
p_{i}(x,x,y) & \approx & p_{i+1}(x,y,y)\text{ for all }0<i<n-1\nonumber \\
p_{n-1}(x,x,y) & \approx & y.\nonumber 
\end{eqnarray}
Ever since Mal'cev's mentioned result, such conditions, postulating
the existence of derived operations satisfying a (possibly $n-$indexed)
series of equations have been called \emph{Mal'cev conditions}, and
these have played an eminent role in the development of universal
algebra. They may generally be of the form 
\[
\exists\,n\in\mathbb{N}.\,\exists\,p_{1},...,p_{n}.\,\Gamma
\]
 where $p_{1},...,p_{n}$ are terms and $\Gamma$ is a set of equations
involving the terms $p_{1},...,p_{n}.$ Such a condition is supposed
to hold in a variety $\mathcal{V}$ of universal algebras if for some
$n\in\mathbb{N}$ there exist terms $p_{1},...,p_{n}$ in the operations
of $\mathcal{V}$ satisfying the equations in $\Gamma$.

B.~J\'{o}nsson \cite{Jonsson1967} gave a Mal'cev condition characterizing
\emph{congruence distributive} varieties, i.e. varieties $\mathcal{V}$
in which the lattice of congruences of each algebra $\mathfrak{A}\in\mathcal{V}$
is distributive. A Mal'cev condition characterizing \emph{congruence
modular} varieties and involving quaternary terms was found by A.~Day
\cite{day_1969}. In 1981, employing commutator theory, we showed
in \cite{Gumm1981}, how to compose J\'{o}nsson's terms for congruence
distributivity with the original Mal'cev term $m(x,y,z)$ from above
in order to characterize congruence modular varieties, while at the
same time obtaining ternary terms.

Notice, that all Mal'cev conditions mentioned above are idempotent,
i.e. $\Gamma\vdash p_{i}(x,...,x)\approx x$ for each of their terms
$p_{i}.$

\section{Preservation of preimages}

A few years ago, Dent, Kearnes, and Szendrei \cite{Dent2012} invented
a syntactic operation on idempotent sets of equations $\Sigma$, called
the \emph{derivative} $\Sigma'$, and they showed that an idempotent
variety $\mathcal{V}(\Sigma)$ is congruence modular if and only if
$\Sigma\cup\Sigma'$ is inconsistent.

Subsequently, Freese \cite{Freese2013} was able to give a similar
char\-ac\-ter\-i\-za\-tion for \emph{n}-permutable varieties,
using an \emph{order derivative}, which was based on the fact that
a variety is \emph{n}-permutable if and only if its algebras are not
orderable, an insight, said to have been observed already by Hagemann
(unpublished), but which first appears in P.~Selinger's PhD-thesis
\cite{Selinger97}.

It will turn out below, after generalizing the relevant definitions
from \cite{Dent2012} to non-idempotent varieties, that the derivative
also serves us to characterize free-algebra functors preserving preimages.

We start by slightly modifying the definition of weak independence
from \cite{Dent2012}:
\begin{defn}
A term $p$ is \emph{weakly independent} of a variable occurrence
$x,$ if there exists a term $q$ such that $\Sigma\vdash p(x,v_{1},...,v_{n})\approx q(y)$
where $x\ne y$ and $v_{1},...,v_{n}$ are variables. $p$ is \emph{independent}
of $x,$ if $\Sigma\models p(x,z_{1},...,z_{n})\approx p(y,z_{1},...,z_{n})$
where $x,y,$ and all variables $z_{1},...,z_{n}$ are distinct.
\end{defn}
As an example, consider the the variety of groups. The term $m(x,y,z):=xy^{-1}z$
is weakly independent of its first argument, since $m(x,x,y)=xx^{-1}y\approx y$
holds, but it is not independent of the same argument, since $m(x,z_{1},z_{2})=xz_{1}^{-1}z_{2}\not\approx yz_{1}^{-1}z_{2}=m(y,z_{1},z_{2})$.
The term $p(x,y):=xyx^{-1}$ is \emph{independent} of its first argument
in the variety of abelian groups, but not in the variety of all groups,
since $p(x,z)=xzx^{-1}\not\approx yzy^{-1}=p(y,z).$ More generally,
any Mal'cev term $m(x,y,z)$ is \emph{weakly independent} of each
of its arguments, but cannot be independent of either of them. 

Clearly, if $p$ is independent of $x$ then it is also weakly independent
of $x,$ since $p(x,z_{1},...,z_{n})\approx p(y,z_{1},...,z_{n})$
entails $p(x,y,...,y)\approx p(y,y,...,y)=:q(y).$

We now define the \emph{derivative} $\Sigma'$ \emph{of} $\Sigma$
as the set of all equations asserting that a term which is weakly
independent of a variable occurrence $x$ should also be independent
of that variable:

\[
\Sigma':=\{p(x,\overrightarrow{z})\approx p(y,\overrightarrow{z})\mid p(x,\overrightarrow{w})\,\text{weakly independent of }x\}.
\]
We are now ready to state the main theorem of this section:
\begin{thm}
$F_{\Sigma}$ (weakly) preserves preimages if and only if ~$\Sigma\vdash\Sigma'$.
\end{thm}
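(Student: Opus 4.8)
The plan is to use Lemma~\ref{lem:classifying} and reduce everything to classifying preimages, which in algebraic terms correspond to a subset $U\subseteq X$ of variables: the pullback of the inclusion $\{1\}\hookrightarrow\{0,1\}$ along $\chi_U$ is just the inclusion $U\hookrightarrow X$. Applying $F_\Sigma$, preservation of this preimage amounts to saying that the square
\[
\xymatrix{F_{\Sigma}(X)\ar[r]^{F_{\Sigma}\chi_{U}} & F_{\Sigma}(\{0,1\})\\
F_{\Sigma}(U)\ar@{^{(}->}[u]\ar[r] & F_{\Sigma}(\{1\})\ar@{^{(}->}[u]
}
\]
is a (weak) pullback. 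Since $F_\Sigma$ preserves monos (as noted in the Preliminaries for the free-algebra functor), weak preservation and preservation coincide here, so I only need the weak-pullback condition: whenever a term $p(x_1,\dots,x_n)\in F_\Sigma(X)$ satisfies $(F_\Sigma\chi_U)\,p = (F_\Sigma!_U)\,q$ for some $q\in F_\Sigma(\{1\})$, i.e.\ substituting $\chi_U$ into $p$ collapses it modulo $\Sigma$ to the constant term built from the single variable $1$, then $p$ must already lie in the image of $F_\Sigma(U)\hookrightarrow F_\Sigma(X)$, i.e.\ $p$ is provably equal to a term using only variables from $U$.

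The translation step is the conceptual core. Spelling out $F_\Sigma\chi_U$: it replaces every variable occurrence in $U$ by $1$ and every occurrence outside $U$ by $0$. The hypothesis $(F_\Sigma\chi_U)\,p = q(1)$ (for some unary term $q$, which by substituting $0\mapsto 1$ we may take to be of the form $q(1)$) says precisely that when we identify all the "in-$U$" variables to one value $y$ and all the "out-of-$U$" variables to another value $x$, the resulting term becomes $q(y)$ — that is, $p$ with its occurrences partitioned into an $x$-block and a $y$-block becomes independent, in the collapsed-to-two-variables sense, of the $x$-block: it equals $q(y)$. This is exactly \emph{weak independence} of $p$ from (the occurrences corresponding to) the complement of $U$. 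The conclusion, that $p$ is provably equal to a term in the variables of $U$ alone, is exactly \emph{independence} from those same occurrences (one direction of the equivalence; the converse being Lemma's trivial remark that independence implies weak independence). Carrying this out carefully — handling the general case of a single variable occurrence $x$ versus a block, and checking that the equations generated are exactly those in $\Sigma'$ — is the main obstacle, because one must be careful that weak independence "of a variable occurrence $x$" in the definition of $\Sigma'$ matches the block-collapsing picture, and that it suffices to test $U$'s whose complement is a single occurrence (or, symmetrically, reduce an arbitrary $U$ to this case by iterating).

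Concretely I would argue: \textbf{($\Leftarrow$)} Assume $\Sigma\vdash\Sigma'$. Given $p\in F_\Sigma(X)$ and $U\subseteq X$ with $(F_\Sigma\chi_U)\,p$ factoring through $F_\Sigma(\{1\})$, read off from the factorization that $p$ is weakly independent of each occurrence $x\notin U$; then each corresponding equation $p(x,\overrightarrow z)\approx p(y,\overrightarrow z)$ lies in $\Sigma'$, hence is provable from $\Sigma$; applying these for every occurrence outside $U$ rewrites $p$, modulo $\Sigma$, into a term whose variables all lie in $U$, giving the required $w\in F_\Sigma(U)$ with $(F_\Sigma(U\hookrightarrow X))\,w = p$. \textbf{($\Rightarrow$)} Assume $F_\Sigma$ preserves preimages. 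Take an arbitrary defining equation $p(x,\overrightarrow z)\approx p(y,\overrightarrow z)$ of $\Sigma'$, coming from a term $p(x,\overrightarrow w)$ weakly independent of $x$, say $\Sigma\vdash p(x,\overrightarrow w)\approx r(y)$ with $y\ne x$. Work in $F_\Sigma(X)$ with $X=\{x,y,\overrightarrow z\}$ (the relevant variables, all distinct), let $U = X\setminus\{x\}$, and observe that $(F_\Sigma\chi_U)$ applied to $p(x,\overrightarrow z)$ collapses $x\mapsto 0$, everything else $\mapsto 1$, yielding $p(0,\overrightarrow 1)$; using $\Sigma\vdash p(x,\overrightarrow w)\approx r(y)$ under the substitution sending $x\mapsto 0$ and all of $\overrightarrow w$ to $1$, this equals $r(1)$, which is in the image of $F_\Sigma(\{1\})$. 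Preservation of the classifying preimage for $U$ then forces $p(x,\overrightarrow z)$ to be provably equal to a term in the variables $U=\{y,\overrightarrow z\}$; but a term in those variables obtained from $p(x,\overrightarrow z)$ by the identity substitution on $\overrightarrow z$ and by renaming is exactly $p(y,\overrightarrow z)$ up to the provable equality, so $\Sigma\vdash p(x,\overrightarrow z)\approx p(y,\overrightarrow z)$. As this holds for every equation of $\Sigma'$, we conclude $\Sigma\vdash\Sigma'$.
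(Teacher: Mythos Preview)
Your $(\Leftarrow)$ direction is essentially the paper's argument: reduce via Lemma~\ref{lem:classifying} to classifying preimages, observe that $(F_\Sigma\chi_U)p\in F_\Sigma(\{1\})$ makes $p$ weakly independent of each occurrence outside $U$, and iterate the $\Sigma'$-equations to push $p$ into $F_\Sigma(U)$. That part is fine.

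The $(\Rightarrow)$ direction has a genuine gap. You compute $(F_\Sigma\chi_U)(p(x,\vec z))=p(0,\vec 1)$ and then claim this equals $r(1)$ by substituting ``$x\mapsto 0$ and all of $\vec w$ to $1$'' into the weak-independence witness $p(x,\vec w)\approx r(y)$. But the definition of weak independence only requires $y\neq x$; it allows some of the $w_i$ to \emph{equal} $x$. In that case your substitution is not well-defined as a variable substitution, and any map sending $x\mapsto 0$ turns the left side into $p(0,\dots,0,\dots)$ with $0$'s in several positions, not into $p(0,1,\dots,1)$. Concretely, for $\Sigma=\{p(x,x,y)\approx y\}$ the term $p$ is weakly independent of its first occurrence only via $w_1=x$, and $p(0,1,1)$ is \emph{not} an element of $F_\Sigma(\{1\})$; so you cannot verify the hypothesis needed to invoke the classifying preimage at all.

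The paper avoids this by \emph{not} restricting to classifying preimages in the $(\Rightarrow)$ direction. It uses the map $\varphi:\{x,y,z_1,\dots,z_n\}\to\{x,y,w_1,\dots,w_n\}$ fixing $x,y$ and sending $z_i\mapsto w_i$, and takes the preimage of $\{y\}$. Then $(F_\Sigma\varphi)(p(x,\vec z))=p(x,\vec w)\approx q(y)\in F_\Sigma(\{y\})$ holds \emph{on the nose}, with no substitution needed, and since $x\notin\varphi^{-1}(\{y\})$ one gets $p(x,\vec z)\in F_\Sigma(\{y,\vec z\})$ regardless of whether some $w_i=x$. Your final step---from $p(x,\vec z)\approx s(y,\vec z)$ to $p(x,\vec z)\approx p(y,\vec z)$---is correct but obscurely worded; the paper spells it out: substitute $y$ for $x$ in the first equation to get $p(y,\vec z)\approx s(y,\vec z)$, then use transitivity.
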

\begin{proof}
Consider a term $p$ which is weakly independent of a variable occurrence
$x,$ i.e. there exists a term $q$ such that $\Sigma\vdash p(x,w_{1},...,w_{n})\approx q(y)$
where $x,y,w_{1},...,w_{n}$ are occurrences of not necessarily distinct
variables, but $x\ne y.$

We must show that $p(x,z_{1},...,z_{n})\approx p(y,z_{1},...,z_{n})$
for $x,y,z_{1},...,z_{n}$ mutually distinct variables.

Consider the map $\varphi:\{x,y,z_{1},...,z_{n}\}\to\{x,y,w_{1},...,w_{n}\}$
which fixes $x$ and $y$ and sends each $z_{i}$ to $w_{i}$. Then

\begin{eqnarray*}
(F_{\Sigma}\varphi)(p(x,z_{1},...,z_{n})) & = & p(\varphi x,\varphi z_{1},...,\varphi z_{n})\\
 & = & p(x,w_{1},...,w_{n})\\
 & \approx & q(y)\in F_{\Sigma}(\{y\}),
\end{eqnarray*}
so $p(x,z_{1},...,z_{n})$ is in the preimage of $F_{\Sigma}(\{y\})$
under $F_{\Sigma}\varphi.$

The preimage of $\{y\}\subseteq\{x,y,w_{1},...,w_{n}\}$ under $\varphi$
does not contain $x,$ so $\varphi^{-1}(\{y\})\subseteq\{y,z_{1},...,z_{n}\}.$
Assuming that $F_{\Sigma}$ preserves preimages, we obtain 
\[
p(x,z_{1},...,z_{n})\in F_{\Sigma}(\{y,z_{1},...,z_{n}\}),
\]
 so there exists a term $r$ with 
\[
p(x,z_{1},...,z_{n})\approx r(y,z_{1},...,z_{n}).
\]

Since $x,y\notin\{z_{1},...,z_{n}\},$ by substituting $y$ for $x$
in this equation, we also find $p(y,z_{1},...,z_{n})\approx r(y,z_{1},...,z_{n})$,
so 
\[
p(x,z_{1},...,z_{n})\approx p(y,z_{1},...,z_{n})
\]
 by transitivity.

Conversely, assume that $\Sigma\vdash\Sigma'$, then we need to verify
that $F_{\Sigma}$ preserves preimages. According to Lemma \ref{lem:classifying},
we need only verify that $F_{\Sigma}$ preserves classifying preimages.
Let $X$ and $Y$ be disjoint sets and let $\varphi:X\cup Y\to\{x,y\}$
be given, sending elements from $X$ to $x$ and elements from $Y$
to $y.$ Then $Y\subseteq X\cup Y$ is the preimage of $\{y\}$ under
$\varphi$. We must show that the following is a preimage diagram:
\[
\xymatrix{F_{\Sigma}(X\cup Y)\ar[r]^{F_{\Sigma}\varphi} & F_{\Sigma}(\{x,y\})\\
F_{\Sigma}(Y)\ar[r]\ar@{^{(}-->}[u] & F_{\Sigma}(\{y\})\ar@{^{(}->}[u]
}
\]
Given an element $p\in F_{\Sigma}(X\cup Y)$ with $(F_{\Sigma}\varphi)p\in F_{\Sigma}(\{y\})$
we must show that $p\in F_{\Sigma}(Y).$ 

Let $x_{1},...,x_{n},y_{1},...,y_{m}$ be all variables occurring
in $p$ where $x_{i}\in X$ and $y_{i}\in Y$. The assumption yields
\[
p(\varphi x_{1},...,\varphi x_{n},\varphi y_{1},...,\varphi y_{m})=p(x,...,x,y,...,y)\in F_{\Sigma}(y),
\]
i.e. $p(x,...,x,y,...,y)\approx q(y)$ for some term $q\in F_{\Sigma}(\{y\})$.
This means that $p$ is weakly independent of each of its occurrences
of $x,$ so our assumption $\Sigma\vdash\Sigma'$ says that $\mathcal{V}$
also satisfies the equations 
\begin{eqnarray*}
p(x_{1},x_{2},...,x_{n},y_{1},...,y_{m}) & \approx & p(z_{1},x_{2},...,x_{n},y_{1},...,y_{m})\\
 & \approx & p(z_{1},z_{2},...,x_{n},y_{1},...,y_{m})\\
 & \approx & ...\\
 & \approx & p(z_{1},z_{2},...,z_{n},y_{1},...,y_{m})
\end{eqnarray*}
In particular, by choosing $y$ arbitrarily from $\{y_{1},...,y_{m}\},$
we obtain 
\[
p=p(x_{1},...,x_{n},y_{1},...,y_{m})\approx p(y,...,y,y_{1},...,y_{n})\in F_{\Sigma}(Y)
\]
 as required.
\end{proof}
Whereas Dent, Kearnes and Szendrei have shown that congruence modular
varieties are characterized by the fact that adding the derivative
of their defining equations yields an inconsistent theory, we have
just seen that $F_{\Sigma}$ preserving preimages delineates the other
extreme, $\Sigma\vdash\Sigma'$. Clearly, therefore, for modular varieties
$\mathcal{V}(\Sigma)$, the variety functor $F_{\Sigma}$ does not
preserve preimages.

\section{Preservation of kernel pairs}

We begin this section with a positive result:
\begin{thm}
If $\mathcal{V}(\Sigma)$ is a Mal'cev variety, then $F_{\Sigma}$
weakly preserves pullbacks of epis.
\end{thm}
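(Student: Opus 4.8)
The plan is to show that $F_\Sigma$ weakly preserves kernel pairs, since by Corollary \ref{cor:GH} this is equivalent to weak preservation of pullbacks of epis. So I would start with an epi $f : A \to C$, form its kernel pair $(K,\pi_1,\pi_2)$ in $Set$, and verify elementwise that $(F_\Sigma(K), F_\Sigma\pi_1, F_\Sigma\pi_2)$ is a weak pullback of $F_\Sigma f$ with itself. Concretely: given terms $u_1, u_2 \in F_\Sigma(A)$ with $(F_\Sigma f)u_1 = (F_\Sigma f)u_2$ in $F_\Sigma(C)$, I must produce some $w \in F_\Sigma(K)$ with $(F_\Sigma\pi_i)w = u_i$. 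Here $K = \{(a,a') \in A\times A \mid fa = fa'\}$, and $\pi_i$ are the projections, which are themselves epis because $f$ is (the diagonal of $A$ sits inside $K$).

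The key idea is to use the Mal'cev term $m$ to ``glue'' $u_1$ and $u_2$ along their common image. Write $u_1 = u_1(a_1,\dots,a_k)$ and $u_2 = u_2(b_1,\dots,b_\ell)$ listing the variables occurring, all lying in $A$. The hypothesis $(F_\Sigma f)u_1 = (F_\Sigma f)u_2$ says $u_1(fa_1,\dots,fa_k) \approx u_2(fb_1,\dots,fb_\ell)$ is a consequence of $\Sigma$; that is, the two terms induce the same operation on $C$. I would like to realize $u_i$ as $(F_\Sigma\pi_i)$ applied to a single term $w$ over $K$. The natural candidate is built from the Mal'cev term: pick a section $g : C \to A$ of $f$ (axiom of choice), and for a variable $a \in A$ set $\hat a := (a, g f a) \in K$ and $\check a := (g f a, a) \in K$ — note $\pi_1\hat a = a$, $\pi_2\hat a = gfa$, $\pi_1\check a = gfa$, $\pi_2\check a = a$. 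Then consider
\[
w := m\bigl(u_1(\hat a_1,\dots,\hat a_k),\; w_0,\; u_2(\check b_1,\dots,\check b_\ell)\bigr),
\]
where $w_0$ is the ``middle'' term living over the image: $w_0 := u_1(\widetilde{a_1},\dots,\widetilde{a_k})$ with $\widetilde a := (gfa, gfa) \in K$, which equals (modulo $\Sigma$, using the identity $u_1(fa) \approx u_2(fb)$ pushed through $g$) $u_2(\widetilde{b_1},\dots,\widetilde{b_\ell})$. Applying $F_\Sigma\pi_1$ sends this to $m(u_1(a_1,\dots,a_k),\, u_1(gfa_1,\dots,gfa_k),\, u_2(gfb_1,\dots,gfb_\ell))$; the last two arguments are equal terms over $A$ (again by the shared identity), so this collapses via $m(p,q,q)\approx p$ to $u_1$. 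Symmetrically $F_\Sigma\pi_2$ gives $m(u_1(gfa),\,u_1(gfa),\,u_2(b)) = m(u_2(gfb),\,u_2(gfb),\,u_2(b)) \approx u_2$ using $m(q,q,p)\approx p$.

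I expect the main obstacle to be the bookkeeping around \emph{which} variables are shared between $u_1$ and $u_2$ and ensuring the middle term is genuinely well-defined as an element of $F_\Sigma(K)$: I need that $u_1(\widetilde{a_1},\dots,\widetilde{a_k})$ and $u_2(\widetilde{b_1},\dots,\widetilde{b_\ell})$ are \emph{equal} in $F_\Sigma(K)$, not merely mapped to equal elements downstairs. This follows because $\widetilde a$ depends only on $fa$, so the map $a \mapsto \widetilde a$ factors through $f$ as $A \xrightarrow{f} C \xrightarrow{\Delta g} K$ where $\Delta g\, c = (gc,gc)$; hence $u_1(\widetilde{a_i}) = (F_\Sigma(\Delta g))(u_1(fa_i))$ and $u_2(\widetilde{b_j}) = (F_\Sigma(\Delta g))(u_2(fb_j))$, and these agree precisely because $u_1(fa_i) = u_2(fb_j)$ in $F_\Sigma(C)$ by hypothesis. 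The other routine points — that $\hat a, \check a, \widetilde a$ really lie in $K$, and that the two Mal'cev identities produce the claimed collapses after $F_\Sigma\pi_i$ is applied, which amounts to functoriality $F_\Sigma\pi_i$ being substitution of coordinates — I would state but not belabor. A cleaner packaging, which I would adopt if it shortens the exposition, is to invoke Lemma \ref{lem:sum} or mimic the proof of Theorem \ref{thm:Gumm-Henkel} directly, using the Mal'cev term to build the required comparison morphism into $F_\Sigma(K)$; but the explicit term construction above makes the role of the Mal'cev operation transparent.
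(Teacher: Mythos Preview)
Your proposal is correct and follows essentially the same route as the paper: reduce via Corollary~\ref{cor:GH} to kernel pairs, choose a section $g$ of $f$, and set $w=m(\bar p,\bar r,\bar q)$ with $\bar p=u_{1}(\hat a_{i})$, $\bar r=u_{1}(\tilde a_{i})$, $\bar q=u_{2}(\check b_{j})$, then collapse via the Mal'cev identities after applying $F_{\Sigma}\pi_{i}$. Your explicit justification that the middle term is well-defined in $F_{\Sigma}(K)$ via the factorization $a\mapsto\tilde a = (\Delta g)\circ f$ is a nice touch that the paper leaves implicit.
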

\begin{proof}
According to Corollary \ref{cor:GH}, it suffices to check that $F_{\Sigma}$
weakly preserves kernel pairs of a surjective map $f:X\to Y.$ The
kernel of $f$ is 
\[
K=\{(x,x')\mid f(x)=f(x')\}
\]
 with the cartesian projections $\pi_{1}$ and $\pi_{2}.$

Given $p:=p(x_{1},...,x_{m})\in F_{\Sigma}(X)$ and $q:=q(x_{1}',...,x_{n}')\in F_{\Sigma}(X),$
and $r:=r(y_{1},...,y_{k})\in F_{\Sigma}(Y)$ with 
\[
(F_{\Sigma}f)p=r=(F_{\Sigma}f)q,
\]
we need to find some $\bar{m}\in F_{\Sigma}(K)$ such that $(F_{\Sigma}\pi_{1})\bar{m}=p$
and $(F_{\Sigma}\pi_{2})\bar{m}=q$.

\[
\xymatrix{F_{\Sigma}(K)\ar@<0.15pc>[r]^{F_{\Sigma}\pi_{1}}\ar@<-0.15pc>[r]_{F_{\Sigma}\pi_{2}} & F_{\Sigma}(X)\ar[r]^{F_{\Sigma}f} & F_{\Sigma}(Y)\\
 & \bullet\ar@/^{1pc}/@{..>}[ul]^{\bar{m}}\ar@<0.2pc>[u]^{p}\ar@<-0.2pc>[u]_{q}\ar@/_{1pc}/[ur]_{r}
}
\]

Choose $g$ as right inverse to $f$, i.e. $f\circ g=id_{Y}$, and
define:
\begin{eqnarray*}
\bar{p} & := & p((x_{1},gfx_{1}),...,(x_{m},gfx_{m})),\\
\bar{r} & := & r((gy_{1},gy_{1}),...,(gy_{k},gy_{k})),\\
\bar{q} & := & q((gfx'_{1},x'_{1}),...,(gfx'_{n},x'_{n})),
\end{eqnarray*}
then it is easy to check that $\bar{p},$~$\bar{r,}$ and $\bar{q}$
are elements of $F_{\Sigma}(K),$ hence the same is true for $\bar{m}:=m(\bar{p},\bar{r},\bar{q}),$
where $m(x,y,z)$ is the Mal'cev term. Moreover,

\begin{eqnarray*}
(F_{\Sigma}\pi_{1})\bar{m} & = & (F_{\Sigma}\pi_{1})m(\bar{p},\bar{r},\bar{q})\\
\phantom{{aaaa}} & = & m((F_{\Sigma}\pi_{1})\bar{p},(F_{\Sigma}\pi_{1})\bar{r},(F_{\Sigma}\pi_{1})\bar{q})\\
 & = & m(p(x_{1},...,x_{m}),r(gy_{1},...,gy_{k}),q(gfx_{1}',...,gfx_{n}'))\\
 & = & m(p,(F_{\Sigma}g)r(y_{1},...,y_{k}),(F_{\Sigma}g)(F_{\Sigma}f)q(x_{1}',...,x_{n}'))\\
 & = & m(p,(F_{\Sigma}g)r,(F_{\Sigma}g)r)\\
 & = & p,
\end{eqnarray*}
and similarly, $(F_{\Sigma}\pi_{2})\bar{m}=q.$
\end{proof}
Preservation of kernel pairs for the functor $F_{\Sigma}$ leads to
some interesting syntactic condition on terms, which might be worth
of further study:
\begin{lem}
\label{lem:kernel pres}If $F_{\Sigma}$ weakly preserves kernel pairs,
then for any terms $p,q$ we have:

\[
p(x,x,y)\approx q(x,y,y)
\]
 if and only if there exists a quaternary term $s$ such that 
\begin{eqnarray*}
p(x,y,z) & \approx & s(x,y,z,z)\\
q(x,y,z) & \approx & s(x,x,y,z).
\end{eqnarray*}
\end{lem}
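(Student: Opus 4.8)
The plan is to mimic the elementwise argument used in the Mal'cev theorem above, but in the reverse direction: instead of producing a witness in $F_\Sigma(K)$, I will \emph{start} from a suitable witness and read off the term $s$. The natural map to exploit is a surjection $f\colon X\twoheadrightarrow Y$ whose kernel pair $K$ separates exactly the two ``edge'' variables that $p$ and $q$ are allowed to glue. Concretely, take $X=\{a_1,a_2,b\}$ and $Y=\{c,d\}$ with $f(a_1)=f(a_2)=c$ and $f(b)=d$, so that the kernel $K\subseteq X\times X$ contains the diagonal together with the pair $(a_1,a_2)$ and $(a_2,a_1)$. Then $p(a_1,a_2,b)$ and $q(a_1,a_2,b)$ are elements of $F_\Sigma(X)$, and the hypothesis $p(x,x,y)\approx q(x,y,y)$ is exactly the statement that $(F_\Sigma f)\,p(a_1,a_2,b)=(F_\Sigma f)\,q(a_1,a_2,b)$, since both collapse to $p(c,c,d)=q(c,d,d)$ in $F_\Sigma(Y)$.

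Next I would invoke weak preservation of the kernel pair $(K,\pi_1,\pi_2)$: applying $F_\Sigma$ gives a weak pullback, so there is an element $w\in F_\Sigma(K)$ with $(F_\Sigma\pi_1)w = p(a_1,a_2,b)$ and $(F_\Sigma\pi_2)w = q(a_1,a_2,b)$. Now $w$ is (the class of) a term $w$ in variables drawn from $K$. The point is that $K$ as a set has exactly four elements relevant here --- $(a_1,a_1),(a_2,a_2),(b,b)$, and one (or both) of the off-diagonal pairs, say $(a_1,a_2)$ --- so $w$ is a term $s$ in at most these four ``letters''. Writing $s$ as a quaternary term in the formal variables corresponding to $(a_1,a_1)$, $(a_1,a_2)$, $(a_2,a_2)$, $(b,b)$ in that order, and pushing $w$ forward along $\pi_1$ and $\pi_2$ (which on these four letters act as $a_1,a_1,a_2,b$ and $a_1,a_2,a_2,b$ respectively), one reads off
\begin{align*}
p(a_1,a_2,b) &\approx s(a_1,a_1,a_2,b),\\
q(a_1,a_2,b) &\approx s(a_1,a_2,a_2,b).
\end{align*}
Renaming $a_1\mapsto x$, $a_2\mapsto y$, $b\mapsto z$ yields $p(x,y,z)\approx s(x,x,y,z)$ and $q(x,y,z)\approx s(x,y,y,z)$; a harmless permutation of the argument slots of $s$ brings this into the stated normal form $p(x,y,z)\approx s(x,y,z,z)$, $q(x,y,z)\approx s(x,x,y,z)$. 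The converse direction is the trivial substitution $y\mapsto x$ in the first identity and $z\mapsto y$ in the second (after aligning conventions), both giving $p(x,x,y)\approx s(x,x,y,y)\approx q(x,y,y)$.

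The main obstacle, and the step that needs care, is the claim that the witness $w\in F_\Sigma(K)$ can be taken to mention only the four listed elements of $K$ --- in particular that no off-diagonal pair \emph{other} than $(a_1,a_2)$ and $(a_2,a_1)$ intrudes, and that we may suppress $(a_2,a_1)$ in favour of $(a_1,a_2)$ or otherwise account for it. With $|X|=3$ and the chosen $f$, the set $K$ has exactly five elements: the three diagonal ones plus $(a_1,a_2)$ and $(a_2,a_1)$; so $w$ a priori involves up to five letters, not four. I would resolve this either by choosing $X$ and $f$ more economically (e.g.\ a two-element $X=\{a,b\}$ mapping to a one-point... no --- that kills the distinction), or, more robustly, by post-composing with the substitution on $K$ that swaps the two coordinates: the flip $\tau\colon K\to K$, $\tau(u,v)=(v,u)$, satisfies $\pi_1\circ\tau=\pi_2$ and $\pi_2\circ\tau=\pi_1$, and symmetrizing $w$ against $\tau$ (or simply substituting $(a_1,a_2)$ for $(a_2,a_1)$ in $w$, which is legitimate inside $F_\Sigma(K)$ only if we first argue it does not change the $\pi_i$-images --- it doesn't, because after applying $\pi_1$ both pairs land on $a_1$ versus... here one must be slightly more careful) lets us eliminate the fifth letter at the cost of replacing $s$ by $s$ with two of its slots identified, which only helps. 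I expect the cleanest writeup uses the flip $\tau$ explicitly. Everything else is routine: naturality of $F_\Sigma$ with respect to substitutions, and the bookkeeping of which letter of $K$ maps where under $\pi_1,\pi_2$.
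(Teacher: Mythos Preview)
Your setup has a genuine error that breaks the argument. With $f\colon\{a_1,a_2,b\}\to\{c,d\}$ sending $a_1,a_2\mapsto c$ and $b\mapsto d$, you compute $(F_\Sigma f)\,q(a_1,a_2,b)$ incorrectly: it equals $q(fa_1,fa_2,fb)=q(c,c,d)$, \emph{not} $q(c,d,d)$. The hypothesis $p(x,x,y)\approx q(x,y,y)$ yields $p(c,c,d)\approx q(c,d,d)$, which is not the equality you need. The underlying reason is structural: a kernel pair is the pullback of a map along \emph{itself}, so both legs perform the \emph{same} identification of variables, whereas the hypothesis is asymmetric --- it identifies the first two arguments of $p$ but the last two arguments of $q$. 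A single surjection $f$ applied to the same tuple cannot encode both identifications simultaneously, so no witness $w\in F_\Sigma(K)$ with the desired projections is forced to exist.

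The paper sidesteps this by not working with a kernel pair directly. It first invokes the earlier result that weak preservation of kernel pairs implies weak preservation of pullbacks of epis, and then takes two \emph{different} surjections $\varphi,\psi\colon\{x,y,z\}\to\{x,z\}$, with $\varphi$ collapsing $\{x,y\}$ and $\psi$ collapsing $\{y,z\}$. Then $(F_\Sigma\varphi)\,p(x,y,z)=p(x,x,z)$ and $(F_\Sigma\psi)\,q(x,y,z)=q(x,z,z)$ agree precisely by the hypothesis, and the pullback $P=\{(x,x),(y,x),(z,y),(z,z)\}$ has exactly four elements. The witness $s\in F_\Sigma(P)$ is therefore automatically quaternary, with $(F_\Sigma\pi_1)s=s(x,y,z,z)\approx p(x,y,z)$ and $(F_\Sigma\pi_2)s=s(x,x,y,z)\approx q(x,y,z)$ on the nose --- no fifth letter, no flip $\tau$, and no reindexing needed. (Incidentally, your ``harmless permutation'' at the end is not one: from $p\approx s(x,x,y,z)$ and $q\approx s(x,y,y,z)$ no relabelling of the argument slots of $s$ produces the stated form; at best you obtain an equivalent but differently shaped statement.)
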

\begin{proof}
The if-direction of the claim is obvious. For the other direction
consider $\varphi,\psi:\{x,y,z\}\to\{x,z\}$ with $\varphi x=\varphi y=\psi x=x,$
and $\varphi z=\psi y=\psi z=z.$ Their pullback is 
\[
P=\{(x,x),(y,x),(z,y),(z,z)\}.
\]
Since $\varphi$ and $\psi$ are epi, according to Theorem \ref{thm:Gumm-Henkel},
their pullback should be weakly preserved, too. Now, given $p$ and
$q$ with $p(x,x,y)\approx q(x,y,y),$ then $(F_{\Sigma}\varphi)p=(F_{\Sigma}\psi)q.$
Therefore, there must be some $s\in F_{\Sigma}(P)$, with $(F_{\Sigma}\pi_{1})s=p$
and $(F_{\Sigma}\pi_{2})s=q$. We obtain:

\begin{eqnarray*}
p(x,y,z) & \approx & (F_{\Sigma}\pi_{1})s((x,x),(y,x),(z,y),(z,z))=s(x,y,z,z),\text{\,\,and}\\
q(x,y,z) & \approx & (F_{\Sigma}\pi_{2})s((x,x),(y,x),(z,y),(z,z))=s(x,x,y,z).
\end{eqnarray*}
\end{proof}
Using the criterion of this Lemma, we now determine, which $n-$permutable
varieties give rise to a functor weakly preserving kernel pairs:
\begin{thm}
If $\mathcal{V}(\Sigma)$ is $n-$permutable, then $F_{\Sigma}$ weakly
preserves kernel pairs if and only if $\mathcal{V}(\Sigma)$ is a
Mal'cev variety, i.e. 2-permutable.
\end{thm}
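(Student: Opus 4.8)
The plan is to treat the two implications separately. The backward direction is already in hand: by the preceding theorem, if $\mathcal{V}(\Sigma)$ is a Mal'cev variety then $F_{\Sigma}$ weakly preserves pullbacks of epis, and in particular kernel pairs.

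For the forward direction, suppose $F_{\Sigma}$ weakly preserves kernel pairs. By Corollary~\ref{cor:GH} it then weakly preserves pullbacks of epis, so Lemma~\ref{lem:kernel pres} is at our disposal. I would now show, under this hypothesis, that an $n$-permutable variety with $n\ge 3$ is already $(n-1)$-permutable; iterating this descent on $n$ brings us down to $2$-permutability, which is exactly the Mal'cev condition (the case $n\le 2$ requiring nothing).

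So fix $n\ge 3$ and let $p_{1},\dots,p_{n-1}$ be Hagemann--Mitschke terms as in~(\ref{eq:n-permutabl}). Since $p_{1}(x,x,y)\approx p_{2}(x,y,y)$, Lemma~\ref{lem:kernel pres} yields a quaternary term $s$ with $p_{1}(x,y,z)\approx s(x,y,z,z)$ and $p_{2}(x,y,z)\approx s(x,x,y,z)$. I would then propose the shortened chain obtained by setting $q_{1}(x,y,z):=s(x,y,y,z)$ and $q_{j}:=p_{j+1}$ for $2\le j\le n-2$; when $n=3$ this chain collapses to the single term $q_{1}$. The equations required of an $(n-1)$-permutable chain are then checked by substitution: $q_{1}(x,y,y)=s(x,y,y,y)\approx p_{1}(x,y,y)\approx x$; the splicing identity $q_{1}(x,x,y)=s(x,x,x,y)\approx p_{2}(x,x,y)$, which equals $p_{3}(x,y,y)=q_{2}(x,y,y)$ when $n\ge 4$ and equals $y$ when $n=3$; and finally the identities $q_{j}(x,x,y)=p_{j+1}(x,x,y)\approx p_{j+2}(x,y,y)=q_{j+1}(x,y,y)$ together with $q_{n-2}(x,x,y)=p_{n-1}(x,x,y)\approx y$, all inherited directly from the original chain.

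The part I expect to be the crux is pinpointing the ``splice point'': one application of Lemma~\ref{lem:kernel pres} to the \emph{first} adjacent pair $(p_{1},p_{2})$ is precisely what fuses these two terms into a single term of the shorter chain, while the tail $p_{3},\dots,p_{n-1}$ is transported unchanged. A little additional care is needed for the degenerate case $n=3$, where the shortened chain consists of the lone term $q_{1}=s(x,y,y,z)$, which one must verify to be a genuine Mal'cev term via $q_{1}(x,x,y)=s(x,x,x,y)\approx p_{2}(x,x,y)\approx y$, using the last Hagemann--Mitschke equation $p_{2}(x,x,y)\approx y$.
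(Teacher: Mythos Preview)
Your proposal is correct and follows essentially the same approach as the paper: apply Lemma~\ref{lem:kernel pres} to the pair $(p_{1},p_{2})$, set the new first term to $s(x,y,y,z)$, and keep $p_{3},\dots,p_{n-1}$ unchanged to obtain an $(n-1)$-permutable chain. Your explicit treatment of the iteration down to $n=2$ and of the boundary case $n=3$ is slightly more detailed than the paper's, but the argument is the same.
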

\begin{proof}
Assuming that $\mathcal{V}(\Sigma)$ is $n-$permutable for $n>2,$
we shall show that it is already $(n-1)-$permutable.

Let $p_{1},...,p_{n-1}$ be the terms from the Mal'cev condition for
$n$-permutability, with the equations \ref{eq:n-permutabl}.

According to Lemma \ref{lem:kernel pres}, we find some term $s(x,y,z,u)$
such that $s(x,y,z,z)\approx p_{1}(x,y,z)$ and $s(x,x,y,z)\approx p_{2}(x,y,z).$
We now define a new term

\[
m(x,y,z):=s(x,y,y,z),
\]
and calculate:

\[
m(x,y,y)\approx s(x,y,y,y)\approx p_{1}(x,y,y)\approx x
\]
as well as

\[
m(x,x,y)\approx s(x,x,x,y)\approx p_{2}(x,x,y)\approx p_{3}(x,y,y).
\]

From this we obtain a shorter chain of equations, discarding $p_{1}$
and $p_{2}$ and replacing them by $m:$

\begin{eqnarray*}
x & \approx & m(x,y,y)\\
m(x,x,y) & \approx & p_{3}(x,y,y)\\
p_{i}(x,x,y) & \approx & p_{i+1}(x,y,y)\text{ for all }2<i<n-1\\
p_{n-1}(x,x,y) & \approx & y.
\end{eqnarray*}
\end{proof}

\section{Conclusion and further work}

We have shown that variety functors $F_{\Sigma}$ preserve preimages
if and only if $\Sigma\vdash\Sigma'$ where $\Sigma'$ is the derivative
of the set of equations $\Sigma$.

For each Mal'cev variety, $F_{\Sigma}$ weakly preserves kernel pairs.
For the other direction, if $F_{\Sigma}$ weakly preserves kernel
pairs, then every equation of the shape $p(x,x,y)\approx q(x,y,y)$
ensures the existence of a term $s(x,y,z,u),$ such that $p(x,y,z)\approx s(x,y,z,z)$
and $q(x,y,y)\approx s(x,x,y,z).$

This intriguing algebraic condition appears to be new and certainly
deserves further study from a universal algebraic perspective. Adding
it to Freese's char\-ac\-ter\-i\-za\-tion of $n$-permutable
varieties by means of his ``order-derivative'', allows to distinguish
between the cases $n=2$ (Mal'cev varieties) and $n>2$, which cannot
be achieved using his order derivative, alone.

It would be desirable to see if this consequence of weak kernel preservation
can be turned into a concise if-and-only-if statement.

\bibliographystyle{plain}

\begin{thebibliography}{10}
\providecommand{\url}[1]{\texttt{#1}}
\providecommand{\urlprefix}{URL }
\providecommand{\doi}[1]{https://doi.org/#1}

\bibitem{Bar93}
Barr, M.: Terminal coalgebras in well-founded set theory. Theoretical Computer
  Science  \textbf{114}(2),  299--315 (1993).
  \doi{10.1016/0304-3975(93)90076-6}

\bibitem{day_1969}
Day, A.: A characterization of modularity for congruence lattices of algebras.
  Canadian Mathematical Bulletin  \textbf{12}(2),  167--173 (1969).
  \doi{10.4153/CMB-1969-016-6}

\bibitem{Dent2012}
Dent, T., Kearnes, K.A., Szendrei, {\'A}.: An easy test for congruence
  modularity. Algebra universalis  \textbf{67}(4),  375--392 (2012).
  \doi{10.1007/s00012-012-0186-z}

\bibitem{Freese2013}
Freese, R.: {Equations implying congruence n-permutability and
  semidistributivity.} Algebra universalis  \textbf{70}(4),  347--357 (2013).
  \doi{10.1007/s00012-013-0256-x}

\bibitem{Gumm1981}
Gumm, H.P.: Congruence modularity is permutability composed with
  distributivity. Arch. Math  \textbf{36},  569--576 (1981).
  \doi{10.1007/BF01223741}

\bibitem{Gum99b}
Gumm, H.P.: Elements of the general theory of coalgebras. In: LUATCS 99. Rand
  Afrikaans University, Johannesburg, South Africa (1999),
  \url{https://www.mathematik.uni-marburg.de/~gumm/Papers/Luatcs.pdf}

\bibitem{Gum05}
Gumm, H.P.: From {T}-coalgebras to filter structures and transition systems.
  In: Fiadeiro, J.L., Harman, N., Roggenbach, M., Rutten, J. (eds.) Algebra and
  Coalgebra in Computer Science. pp. 194--212. Springer Berlin Heidelberg
  (2005). \doi{10.1007/11548133}

\bibitem{Gumm2019}
{Gumm}, H.P.: {Connected monads weakly preserve products}. arXiv e-prints
  arXiv:1909.02259 (Sep 2019), \url{https://arxiv.org/abs/1909.02259}, to
  appear in {\em Algebra universalis}

\bibitem{GS05}
Gumm, H.P., Schr{\"o}der, T.: Types and coalgebraic structure. Algebra
  Universalis  \textbf{53},  229--252 (2005). \doi{10.1007/s00012-005-1888-2}

\bibitem{Hagemann1973}
Hagemann, J., Mitschke, A.: On n-permutable congruences. Algebra universalis
  \textbf{3}(1),  8--12 (1973). \doi{10.1007/BF02945100}

\bibitem{henkel2010}
Henkel, C.: Klassifikation {C}oalgebraischer {T}ypfunktoren. (2010),
  {D}iplomarbeit, Universit{\"a}t Marburg

\bibitem{Jonsson1967}
J{\'o}nsson, B.: Algebras whose congruence lattices are distributive. Math.
  Scand.  \textbf{21},  110--121 (1967). \doi{10.7146/math.scand.a-10850}

\bibitem{Malcev1954}
Mal'cev, A.I.: On the general theory of algebraic systems. (russian). Mat. Sb.
  (N.S.)  \textbf{35}(77),  3--20 (1954)

\bibitem{Malcev1963}
{Mal'tsev}, A.I.: {On the general theory of algebraic systems.} {Transl., Ser.
  2, Am. Math. Soc.}  \textbf{27},  125--142 (1963)

\bibitem{Rutten2000}
Rutten, J.: Universal coalgebra: {A} theory of systems. Theoretical Computer
  Science  \textbf{249},  3--80 (10 2000). \doi{10.1016/S0304-3975(00)00056-6}

\bibitem{Rut96}
Rutten, J.: Universal coalgebra: {A} theory of systems. Tech. rep., CWI,
  Amsterdam (1996), {C}S-R9652

\bibitem{Selinger97}
Selinger, P.: Functionality, polymorphism, and concurrency: a mathematical
  investigation of programming paradigms. Tech. Rep. IRCS 97-17, Univ.
  Pennsylvania (1997), \url{https://repository.upenn.edu/ircs_reports/88/}

\bibitem{TrnkovaDescr1}
Trnkov{\'a}, V.: On descriptive classification of set-functors. {I}. Comm.
  Math. Univ. Carolinae  \textbf{12},  143--174 (1971),
  \url{https://eudml.org/doc/16419}

\end{thebibliography}

\end{document}